\newtheorem{theorem}{Theorem}
\newtheorem{lemma}{Lemma}
\newtheorem{algorithm}{Algorithm}
\newtheorem{remark}{Remark}
\newtheorem{definition}{Definition}
\newtheorem{corollary}{Corollary}
\newcommand{\rd}{\,\mathrm{d}}
\newcommand{\bsx}{\boldsymbol{x}}
\newcommand{\bsy}{\boldsymbol{y}}
\newcommand{\bsz}{\boldsymbol{z}}
\newcommand{\bst}{\boldsymbol{t}}
\newcommand{\nat}{\mathbb{N}}
\newcommand{\RR}{\mathbb{R}}
\begin{document}

\title{On the separability of multivariate functions}

\author{Takashi Goda\thanks{School of Engineering, The University of Tokyo, 
7-3-1 Hongo, Bunkyo-ku, Tokyo 113-8656, Japan
(\tt{goda@frcer.t.u-tokyo.ac.jp})}}

\date{\today}

\maketitle

\begin{abstract}
Separability of multivariate functions alleviates the difficulty in finding a minimum or maximum value of a function such that an optimal solution can be searched by solving several disjoint problems with lower dimensionalities. In most of practical problems, however, a function to be optimized is black-box and we hardly grasp its separability. In this study, we first describe a general separability condition which a function defined over an arbitrary domain satisfies if and only if the function is separable with respect to given disjoint subsets of variables. By introducing an alternative separability condition, we propose a Monte Carlo-based algorithm to estimate the separability of a function defined over unit cube with respect to given disjoint subsets of variables. Moreover, we extend our algorithm to estimate the number of disjoint subsets and the disjoint subsets such that a function is separable with respect to them. Computational complexity of our extended algorithm is function-dependent and varies from linear to exponential in the dimension.
\end{abstract}

\section{Introduction}
\label{intro}

Whether a given multivariate function is separable or not is one of the important measures of the difficulty in optimization. This can be easily understood through the following argument. Suppose that we want to find a minimum value of a real-valued function $f$ which depends on $s$ variables $\bsx=(x_1,\ldots, x_s)$. In what follows, we put $[1:s]=\{1,\ldots,s\}$, and for a subset $u\subseteq [1:s]$, we write $\bsx_u=(x_j)_{j\in u}$ and $-u:=[1:s]\setminus u$. If $f$ is separable with respect to $\bsx_u$  and its complement $\bsx_{-u}$ with some $\emptyset \ne u\subset [1:s]$, that is, if there exist functions $f_1$ and $f_2$ such that $f(\bsx)=f_1(\bsx_u)+f_2(\bsx_{-u})$ holds for any $\bsx$, we can reduce one high-dimensional optimization problem to two disjoint optimization ones with lower dimensionalities. The values of $\bsx_{-u}$ can be fixed while searching for a minimum value of $f_1$, and vice versa. If $f_1$ and $f_2$ are further separable with respect to subsets $\bsx_{v}$ and $\bsx_{w}$ with some $\emptyset \ne v\subset u$ and $\emptyset \ne w\subset -u$, respectively, for instance, we can reduce to four disjoint optimization problems with even lower dimensionalities. As an extreme case, $f$ might be expressed simply as a sum of $s$ univariate functions, i.e., $f(\bsx)=\sum_{j=1}^{s}f_j(x_j)$. Then, the $s$-dimensional optimization problem can be decomposed into $s$ one-dimensional ones. On the other hand, if $f$ is not separable with respect to any subset of variables, we have to search a whole $s$-dimensional space.

The performance of optimization algorithms, especially of heuristics and meta-heuristics, often depend on separability of the function. For instance, as discussed in \cite{Sal96}, the performance of the genetic algorithm deteriorates if we rotate the coordinate of separable benchmark functions, which makes the functions non-separable. Thus, in order to cover a wide class of functions, we generally compose a set of benchmark functions from many separable and non-separable functions for performance comparison of different optimization algorithms, see for instance \cite{GMLH09,LMH11}. What matters in many practical problems, however, is that a function to be optimized is black-box so that we hardly grasp a priori its separability. If the function is separable, we cannot exploit the advantage of the algorithms which perform better for non-separable functions. Otherwise if the function is non-separable, we should avoid the use of the algorithms which perform well only for separable functions. Therefore, we would claim that the separability of the function to be optimized is one of the central issues in choosing a suitable optimization algorithm.

Motivated by the above concern, we investigate the separability of multivariate functions in this study. Our approach is based on the functional decompositions given in the literature, see for instance \cite{ES81,Hoe48,RASS99,Sob90}. These decompositions were generalized by Kuo et al. \cite{KSWW10}. After introducing the preliminaries on those decompositions in the next section, we first derive a general separability condition which a function defined on an arbitrary domain satisfies if and only if that function is separable with respect to given disjoint subsets of variables in Section~\ref{sec:general}. As special cases, it includes the conditions for a function to be separable with respect to one subset of variables and its complement, or to be separable with respect to all the variables. In order to construct a computable algorithm to estimate the separability, we derive an alternative separability condition in Section~\ref{sec:separable}, which is valid for functions in $L^2([0,1]^s)$. Using this alternative condition, we propose a Monte Carlo-based algorithm for the separability estimation. Moreover, we extend our proposed algorithm to estimate the number of disjoint subsets and the disjoint subsets themselves such that a function is separable with respect to them. We show that computational complexity of our extended algorithm is function-dependent and varies from linear to exponential in the dimension. We conclude this paper with numerical experiments in Section~\ref{sec:numer}.
\section{Background and notation}
\label{background}

\subsection{General decomposition formula}
\label{back:1}

Let us consider a decomposition of an $s$-variate function $f \in F$, where $F$ is a linear space of real-valued functions defined on a domain $D\subseteq \RR^s$, into the following form
\[  f = \sum_{u\subseteq [1:s]}f_u .\]
We note that the right-hand side consists of $2^s$ terms with each term $f_u$ depending only on the subset of variables $\bsx_u$. According to \cite[Theorem~2.1]{KSWW10}, $f_u$ can be generally expressed as
\begin{align}\label{eq:decomp1}
  f_u = \left( \prod_{j\in u}(I-P_j)\right) P_{-u}(f) ,
\end{align}
where $\{ P_j: j=1,\ldots, s\}$ is a set of commuting projections on $F$ defined on the domain $D$ such that $P_j(f)$ does not depend on $x_j$ and that $P_j(f)=f$ if $f$ does not depend on $x_j$. Further, we define $P_u:= \prod_{j\in u}P_j$ for $\emptyset \neq u\subseteq [1:s]$ and denote by $P_{\emptyset}:=I$ the identity operator. We can rewrite (\ref{eq:decomp1}) into the following recursive relation
\begin{align}\label{eq:decomp2}
  f_u := P_{-u}(f)-\sum_{v\subset u}f_v ,
\end{align}
where, for $u=\emptyset$, we define
\[  f_{\emptyset} := P_{[1:s]}(f) .\]
Note that $f_{\emptyset}$ is a constant since it does not depend on any $x_j$.

We show two important examples of $P_j$. One is called \textit{anchored decomposition}, see for instance \cite{RASS99,Sob03}, which fixes $x_j$ at some $t_j$
\begin{align}\label{eq:proj_anchor}
  P_j(f)(\bsx) = f(x_1,\ldots, x_{j-1},t_j,x_{j+1},\ldots, x_s) .
\end{align}
where the anchor $\bst=(t_1,\ldots, t_s)$ lies in $D$. The other example with $D=[0,1]^s$ is called \textit{analysis of variance (ANOVA) decomposition}, see for instance \cite{ES81,Hoe48,Sob90}, which integrates out $x_j$
\begin{align}\label{eq:proj_anova}
  P_j(f)(\bsx) = \int_{0}^{1}f(x_1,\ldots, x_{j-1},t_j,x_{j+1},\ldots, x_s)\rd t_j .
\end{align}

The latter has often been used in the context of global sensitivity analysis, which measures the relative importance of each subset of variables on the variance of function, see for instance \cite{CMO97,Primer,Sob90,Sob01}. Since we also use this decomposition in this study, the next subsection is devoted to explaining it in more detail.

\subsection{ANOVA decomposition and Sobol' indices}
\label{back:2}
In what follows, for a subset $u\subset [1:s]$, we denote the cardinality of $u$ by $|u|$. For any function $f\in L^2([0,1]^s)$, by using (\ref{eq:decomp2}) and (\ref{eq:proj_anova}), each term $f_u$ can be obtained as
\[  f_u(\bsx_u) = \int_{[0,1]^{s-|u|}}f(\bsx)\rd \bsx_{-u} -\sum_{v\subset u}f_v(\bsx_v) ,\]
where, for $u=\emptyset$, we have
\[  f_{\emptyset} = \int_{[0,1]^s}f(\bsx) \rd\bsx ,\]
which is simply the integral of $f$. This decomposition satisfies the following important properties
\[  \int_{0}^{1}f_u(\bsx_u)\rd x_j=0 ,\]
for any $j\in u$ if $|u|>0$, and
\[ \int_{[0,1]^s}f_u(\bsx_u)f_v(\bsx_v) \rd \bsx=0 ,\]
for $u,v\subseteq [1:s]$ if $u\ne v$. The former can be proved by induction on $|u|$. The latter immediately follows from the former by considering the integration with respect to $x_j$ for any $j\in (u\cup v)\setminus(u\cap v)$. Using this decomposition and its properties, the variance of $f$, denoted by $\sigma^2$, is expressed by
\begin{align*}
\sigma^2 & = \int_{[0,1]^s}f^2(\bsx) \rd\bsx -\left(\int_{[0,1]^s}f(\bsx) \rd\bsx \right)^2 \\
 & = \int_{[0,1]^s}\sum_{u,v\subseteq [1:s]}f_u(\bsx_u)f_v(\bsx_v) \rd\bsx - f^2_{\emptyset} \\
 & = \sum_{u,v\subseteq [1:s]}\int_{[0,1]^s}f_u(\bsx_u)f_v(\bsx_v) \rd\bsx - f^2_{\emptyset} = \sum_{\emptyset \ne u\subseteq [1:s]}\sigma^2_u ,
\end{align*}
where we have defined
\[ \sigma^2_u:=\int_{[0,1]^{|u|}}f^2_u(\bsx_u) \rd\bsx_u .\]
This equality implies that the subset of variables $\bsx_u$ with largest $\sigma^2_u$ affects most the variance of the function. In other words, the function $f$ is more sensitive to the change of values of $\bsx_u$ with larger $\sigma^2_u$. That is why the ANOVA decomposition plays a central role in global sensitivity analysis.

Sobol' indices were first introduced by Sobol' \cite{Sob90} and have recently been generalized by Owen \cite{Owe13} to measure the relative importance of a subset of variables. For $\emptyset \ne u\subseteq [1:s]$, let us define
\[ \underline{\tau}^2_u := \sum_{\emptyset \ne v\subseteq u}\sigma_v^2 ,\]
and
\[ \overline{\tau}^2_u :=  \sum_{v\cap u\ne 0}\sigma_v^2 .\]
Here, $\underline{\tau}^2_u$ is a sum of $\sigma_v^2$ for $v$ contained in $u$, whereas $\overline{\tau}^2_u$ is a sum of $\sigma_v^2$ for $v$ which touches $u$. It is obvious that we have $0\le \underline{\tau}^2_u \le \overline{\tau}^2_u \le \sigma^2$. We often normalize these quantities by $\underline{\tau}^2_u/ \sigma^2$ and $\overline{\tau}^2_u/ \sigma^2$, respectively. From the definition, we have the following identity
\[ \underline{\tau}^2_{-u}+\overline{\tau}^2_u=\sigma^2 .\]

\section{General separability condition}
\label{sec:general}

In what follows, we consider a partition $\{u_1,\ldots, u_m\}$ of the set $[1:s]$, which satisfies the following properties: $u_j\ne \emptyset$ for $j=1,\ldots, m$,
\[  u_i\cap u_j=\emptyset ,\]
if $i\ne j$, and
\[ \bigcup_{j=1}^{m}u_j=[1:s] .\]
The following theorem gives  a general separability condition which is satisfied for any separable function $f\in F$ with respect to given $m$ disjoint subsets of variables $\bsx_{u_1},\ldots, \bsx_{u_m}$ for a partition $\{u_1,\ldots, u_m\}$. 

\begin{theorem}\label{theorem1}
For $m, s\in \nat$ such that $m\le s$, let $\{u_1,\ldots, u_m\}$ be a partition of $[1:s]$. A function $f\in F$ is separable with respect to $\bsx_{u_1},\ldots, \bsx_{u_m}$ if and only if the following equation holds
\begin{align}\label{eq:sep1}
  \left(\prod_{j=1}^{m}\left( I-P_{-u_j}\right)\right)(f) = 0 .
\end{align}
\end{theorem}

In order to prove Theorem \ref{theorem1}, we need the following lemma.
\begin{lemma}\label{lemma1}
For $m, s\in \nat$ such that $m\le s$, let $\{u_1,\ldots, u_m\}$ be a partition of $[1:s]$. Then we have
\[  \prod_{j=1}^{m}\left( I-P_{-u_j}\right) = I+(m-1)P_{[1:s]}-\sum_{j=1}^{m}P_{-u_j}.\]
\end{lemma}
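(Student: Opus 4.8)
The plan is to prove the identity $\prod_{j=1}^{m}(I-P_{-u_j}) = I+(m-1)P_{[1:s]}-\sum_{j=1}^{m}P_{-u_j}$ by expanding the product over all subsets and showing that every term involving two or more factors $P_{-u_i}$ collapses to $P_{[1:s]}$. The key observation is the following: since $u_1,\dots,u_m$ are disjoint subsets that cover $[1:s]$, we have $-u_i \cup -u_j = [1:s]$ for any $i\neq j$. Because the projections commute and $P_aP_b = P_{a\cup b}$ (this follows from $P_j^2=P_j$ together with commutativity, both of which I would first note explicitly from the definition of the $P_j$), it follows that for any nonempty $S\subseteq\{1,\dots,m\}$ with $|S|\ge 2$, $\prod_{j\in S}P_{-u_j} = P_{\bigcup_{j\in S}(-u_j)} = P_{[1:s]}$.

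Carrying this out: first expand
\begin{eqnarray*}
\prod_{j=1}^{m}(I-P_{-u_j}) = \sum_{S\subseteq \{1,\dots,m\}}(-1)^{|S|}\prod_{j\in S}P_{-u_j}.
\end{eqnarray*}
The $S=\emptyset$ term contributes $I$, and the singleton terms $S=\{j\}$ contribute $-\sum_{j=1}^{m}P_{-u_j}$. For each $S$ with $|S|\ge 2$ the term equals $(-1)^{|S|}P_{[1:s]}$ by the observation above, so these collectively contribute $\left(\sum_{k=2}^{m}\binom{m}{k}(-1)^k\right)P_{[1:s]}$. Then I would use the binomial identity $\sum_{k=0}^{m}\binom{m}{k}(-1)^k = 0$, i.e. $\sum_{k=2}^{m}\binom{m}{k}(-1)^k = -(1 - m) = m-1$, to conclude that the coefficient of $P_{[1:s]}$ is exactly $m-1$. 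Combining the three contributions gives the claimed formula.

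I do not expect a serious obstacle here; the argument is essentially bookkeeping. The one point that needs a little care is justifying $P_aP_b=P_{a\cup b}$: from the stated properties, $P_j(f)$ does not depend on $x_j$, so $P_j(P_j(f)) = P_j(f)$, giving idempotency $P_j^2=P_j$; together with pairwise commutativity of the $\{P_j\}$ this yields $P_a P_b = \prod_{j\in a}P_j\prod_{j\in b}P_j = \prod_{j\in a\cup b}P_j = P_{a\cup b}$. A second minor point is the edge case $m=1$, where the product is simply $I-P_{-u_1}$ and there are no terms with $|S|\ge 2$; the formula correctly gives $I + 0\cdot P_{[1:s]} - P_{-u_1}$, so nothing special is needed. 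With these remarks in place the proof is complete.
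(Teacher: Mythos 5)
Your proof is correct and follows essentially the same route as the paper: expand the product over subsets, observe that every term with at least two factors $P_{-u_i}$ collapses to $P_{[1:s]}$ because the $u_j$ are pairwise disjoint, and evaluate the resulting alternating binomial sum to get the coefficient $m-1$. Your explicit justification of $P_aP_b=P_{a\cup b}$ via idempotency and commutativity is a small addition the paper leaves implicit, but it does not change the argument.
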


\begin{proof}
We note that $P_{-u_i}\cdot P_{-u_j}=P_{[1:s]}$ for $i\ne j$ since $u_i$ and $u_j$ are disjoint with each other. By using this fact and the following identity 
\[  \prod_{j=1}^{m}(a_j+b_j) = \sum_{v\subseteq [1:m]}\left( \prod_{j\in -v}a_j\right) \left( \prod_{j\in v}b_j\right) ,\]
we have
\begin{align*}
  \prod_{j=1}^{m}(I-P_{-u_j}) & = \sum_{v\subseteq [1:m]}\left( \prod_{j\in -v}I\right) \left( \prod_{j\in v}\left(-P_{-u_j}\right)\right) \\
  & = I-\sum_{j=1}^{m}P_{-u_j}+\sum_{\substack{v\subseteq [1:m]\\ |v|\ge 2}}(-1)^{|v|}\left( \prod_{j\in v}P_{-u_j}\right) \\
  & = I-\sum_{j=1}^{m}P_{-u_j}+\left(\sum_{\substack{v\subseteq [1:m]\\ |v|\ge 2}}(-1)^{|v|}\right) P_{[1:s]} .
\end{align*}
In the last term, we have
\begin{align*}
  \sum_{\substack{v\subseteq [1:m]\\ |v|\ge 2}}(-1)^{|v|} & = \sum_{v\subseteq [1:m]}(-1)^{|v|}-\sum_{\substack{v\subseteq [1:m]\\ |v|\le 1}}(-1)^{|v|} \\
  & = (1-1)^{m}-1+m .
\end{align*}
Thus the result follows.
\end{proof}

Now we are ready to prove Theorem~\ref{theorem1}.
\begin{proof} (Theorem \ref{theorem1})
As shown in the proof of \cite[Theorem~2.1]{KSWW10}, we have $P_{-u}(f)=\sum_{v\subseteq u}f_{v}$ for any $u\subset [1:s]$. Applying this relation and Lemma~\ref{lemma1}, we have for the left-hand side of (\ref{eq:sep1})
\begin{align*}
  \left(\prod_{j=1}^{m}\left( I-P_{-u_j}\right)\right)(f) & = \left(I+(m-1)P_{[1:s]}-\sum_{j=1}^{m}P_{-u_j}\right)(f) \\
  & = f+(m-1)f_{\emptyset}-\sum_{j=1}^{m}\sum_{v_j\subseteq u_j}f_{v_j} \\
  & = f-\left(f_{\emptyset}+\sum_{j=1}^{m}\sum_{\emptyset \ne v_j\subseteq u_j}f_{v_j}\right) .
\end{align*}
Given that this equals 0 for any $\bsx\in D$, we can rewrite (\ref{eq:sep1}) into
\[  f = f_{\emptyset}+\sum_{j=1}^{m}\sum_{\emptyset \ne v_j\subseteq u_j}f_{v_j} .\]
Since $f_{\emptyset}$ is a constant and $u_1,\ldots, u_m$ are disjoint with each other, this equation implies that $f$ is separable with respect to  $\bsx_{u_1},\ldots, \bsx_{u_m}$. The proof of the reverse direction is trivial. Hence the result follows.
\end{proof}

Our general separability condition (\ref{eq:sep1}) consists only of function $f$ and projections $(P_u)_{u\subseteq [1:s]}$ and does not include any representation in terms of $(f_u)_{u\subseteq [1:s]}$. We would emphasize here that the condition (\ref{eq:sep1}) is not the same as $(I-P_{-u_j})(f)=0$ for at least one of $j$ with $1\le j\le m$, which only gives
\[  f = \sum_{v_j\subseteq u_j}f_{v_j} .\]
Thus, $(I-P_{-u_j})(f)=0$ for some $j$ is just a sufficient condition for $f$ to be separable with respect to $\bsx_{u_1},\ldots, \bsx_{u_m}$. In the following, we describe the separability conditions for two special cases, both of which might be important in practice.

\begin{corollary}\label{cor1}
A function $f\in F$ is separable with respect to $\bsx_u$ and $\bsx_{-u}$ if and only if the following equation holds
\[  \left( I+P_{[1:s]}-P_{u}-P_{-u}\right) (f) = 0.\]
\end{corollary}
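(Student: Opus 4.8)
The plan is to obtain Corollary~\ref{cor1} as the $m=2$ instance of Theorem~\ref{theorem1}. First I would take $u_1 = u$ and $u_2 = -u$; under the standing convention $\emptyset \neq u \subset [1:s]$, both sets are nonempty, they are disjoint, and $u_1 \cup u_2 = [1:s]$, so $\{u_1,u_2\}$ is a valid family of two disjoint subsets in the sense fixed just before Theorem~\ref{theorem1}. By definition, $f(\bsx)$ being separable with respect to $\bsx_u$ and $\bsx_{-u}$ is literally the same statement as $f(\bsx)$ being separable with respect to $\bsx_{u_1}$ and $\bsx_{u_2}$, so it suffices to transcribe condition~(\ref{eq:sep1}) for this choice.

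Next I would specialize~(\ref{eq:sep1}). Since $-u_1 = -u$ and $-u_2 = u$, its left-hand side is $(I - P_{-u})(I - P_{u})(f)(\bsx)$. Applying Lemma~\ref{lemma1} with $m=2$ gives $\prod_{j=1}^{2}(I - P_{-u_j}) = I + (2-1)P_{[1:s]} - P_{-u_1} - P_{-u_2} = I + P_{[1:s]} - P_{u} - P_{-u}$, which is exactly the operator in the statement. Hence~(\ref{eq:sep1}) reads $(I + P_{[1:s]} - P_u - P_{-u})(f)(\bsx) = 0$ for all $\bsx \in D$, and the desired equivalence follows immediately from Theorem~\ref{theorem1}.

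There is essentially no obstacle: this is a direct corollary. The only point deserving a remark is the boundary behaviour — if $u = \emptyset$ or $u = [1:s]$, one of the two blocks is empty and ``separability with respect to $\bsx_u$ and $\bsx_{-u}$'' degenerates; these cases are excluded by the convention $\emptyset \neq u \subset [1:s]$, after which the argument above is complete.
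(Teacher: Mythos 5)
Your proposal is correct and follows exactly the paper's route: specialize Theorem~\ref{theorem1} to $m=2$ with $u_1=u$, $u_2=-u$, and simplify the operator via Lemma~\ref{lemma1} to $I+P_{[1:s]}-P_u-P_{-u}$. The additional remark about excluding the degenerate cases $u=\emptyset$ and $u=[1:s]$ is a harmless clarification consistent with the paper's convention.
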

It immediately follows by inserting $m=2$, $u_1=u$ and $u_2=-u$ into (\ref{eq:sep1}) and by applying Lemma \ref{lemma1}.

\begin{corollary}\label{cor2}
A function $f\in F$ is separable with respect to all the variables if and only if the following equation holds
\[  \left( I+(s-1)P_{[1:s]}-\sum_{j=1}^{s}P_{-\{j\}}\right) (f) = 0.\]
\end{corollary}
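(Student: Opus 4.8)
The plan is to derive this directly from Theorem~\ref{theorem1} by specializing to the finest possible partition of $[1:s]$, exactly as Corollary~\ref{cor1} specializes to a two-block partition. First I would set $m=s$ and take $u_j=\{j\}$ for $j=1,\ldots,s$. These are manifestly $m=s$ disjoint subsets in the sense defined before Theorem~\ref{theorem1}: each $\{j\}$ is nonempty, $\{i\}\cap\{j\}=\emptyset$ for $i\ne j$, and $\bigcup_{j=1}^{s}\{j\}=[1:s]$. Moreover, being separable with respect to $\bsx_{\{1\}},\ldots,\bsx_{\{s\}}$ is by definition the statement that $f$ is separable with respect to all $s$ of its variables (i.e.\ $f(\bsx)=f_\emptyset+\sum_{j=1}^s f_{\{j\}}(x_j)$), so Theorem~\ref{theorem1} applies verbatim to this case.

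Next I would invoke Theorem~\ref{theorem1}: $f$ is separable with respect to all the variables if and only if $\bigl(\prod_{j=1}^{s}(I-P_{-\{j\}})\bigr)(f)(\bsx)=0$ for every $\bsx\in D$. The only remaining task is to put the operator $\prod_{j=1}^{s}(I-P_{-\{j\}})$ into the stated closed form, and this is immediate from Lemma~\ref{lemma1} with $m=s$ and $u_j=\{j\}$: since $-u_j=[1:s]\setminus\{j\}$, we get
\begin{eqnarray*}
  \prod_{j=1}^{s}\left(I-P_{-\{j\}}\right) = I+(s-1)P_{[1:s]}-\sum_{j=1}^{s}P_{-\{j\}}.
\end{eqnarray*}
Substituting this into the condition from Theorem~\ref{theorem1} gives exactly $\bigl(I+(s-1)P_{[1:s]}-\sum_{j=1}^{s}P_{-\{j\}}\bigr)(f)(\bsx)=0$, which is the claim.

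There is essentially no obstacle here; the result is a pure specialization. The only points that need a word of care are purely bookkeeping: confirming that the hypothesis $m\le s$ of Theorem~\ref{theorem1} is met with equality, that $-u_j$ is correctly read as $[1:s]\setminus\{j\}$ (so that $P_{-u_j}=P_{-\{j\}}$ and the sum $\sum_{j=1}^m P_{-u_j}$ becomes $\sum_{j=1}^{s}P_{-\{j\}}$), and that the coefficient $m-1$ in Lemma~\ref{lemma1} becomes $s-1$. If one wanted to avoid citing Theorem~\ref{theorem1} at all, one could instead repeat its short proof in this case, using $P_{-\{j\}}(f)(\bsx)=\sum_{v\subseteq\{j\}}f_v(\bsx)=f_\emptyset+f_{\{j\}}(x_j)$ together with Lemma~\ref{lemma1}, but the one-line specialization above is cleaner.
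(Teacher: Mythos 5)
Your proposal is correct and follows exactly the paper's own route: specializing Theorem~\ref{theorem1} to $m=s$ and $u_j=\{j\}$ for $j=1,\ldots,s$, and then applying Lemma~\ref{lemma1} to obtain the operator identity $\prod_{j=1}^{s}(I-P_{-\{j\}})=I+(s-1)P_{[1:s]}-\sum_{j=1}^{s}P_{-\{j\}}$. The bookkeeping checks you mention are exactly what the paper leaves implicit in its one-line proof.
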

It also immediately follows by inserting $m=s$ and $u_j=\{j\}$ for $j=1,\ldots, s$ into (\ref{eq:sep1}) and by applying Lemma \ref{lemma1}.

\section{Separability estimation of multivariate functions}
\label{sec:separable}

In the previous section, we have shown a general separability condition, which is necessary and sufficient for $f$ to be separable with respect to given disjoint subsets of variables. It is quite difficult, however, to confirm whether a given black-box function $f$ satisfies this condition or not. Hence, in this section, we propose a computational algorithm based on Monte Carlo methods to estimate the separability of $f$. The key ingredient lies in the use of ANOVA decomposition and Sobol' indices. 

For this purpose we need to restrict ourselves to $f\in L^2([0,1]^s)$. In many practical problems, $D\subseteq \RR^s$ can be replaced by $[0,1]^s$ using suitable transformation of variables. For instance, if $f$ is defined on $\RR^s$ and square-integrable with respect to a density function $\rho$ with independent marginal densities $\rho_1,\ldots,\rho_s$, the function
\[ g(x_1,\ldots,x_s) = f (\Phi^{-1}_1(x_1),\ldots, \Phi^{-1}_s(x_s)) \]
is in $L^2((0,1)^s)$, where $\Phi_i$ denotes the cumulative density function of $\rho_i$
\[ \Phi_i(x) = \int_{-\infty}^{x}\rho_i(t)\rd t. \]
and $\Phi_i^{-1}$ denotes its inverse.

The following theorem shows an alternative separability condition for $f\in L^2([0,1]^s)$, which will be used later in proposing a computational algorithm to estimate the separability of $f$.
\begin{theorem}\label{theorem2}
For $m, s\in \nat$ such that $m\le s$, let $\{u_1,\ldots, u_m\}$ be a partition of $[1:s]$. A function $f\in L^2([0,1]^s)$ is separable with respect to $\bsx_{u_1},\ldots, \bsx_{u_m}$ if and only if the following equation holds
\begin{align}\label{eq:seq_anova1}
\sum_{j=1}^{m}\underline{\tau}^2_{u_j}=\sigma^2 .
\end{align}
\end{theorem}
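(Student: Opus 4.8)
The plan is to connect the ANOVA-based condition \eqref{eq:seq_anova1} to the general separability condition of Theorem \ref{theorem1} specialized to the projections \eqref{eq:proj_anova}. First I would record the two facts about the ANOVA decomposition that do the work: the orthogonality $\int_{[0,1]^s} f_u(\bsx) f_v(\bsx)\rd\bsx = 0$ for $u\ne v$, together with $\sigma^2 = \sum_{\emptyset\ne u\subseteq[1:s]}\sigma_u^2$, and the representation $P_{-u_j}(f)(\bsx) = \sum_{v\subseteq u_j} f_v(\bsx)$ used in the proof of Theorem \ref{theorem1}. From the latter and Lemma \ref{lemma1}, the left-hand side of \eqref{eq:sep1} equals
\begin{eqnarray*}
 g(\bsx) := f(\bsx) - f_{\emptyset} - \sum_{j=1}^{m}\sum_{\emptyset\ne v_j\subseteq u_j} f_{v_j}(\bsx)
 = \sum_{\substack{\emptyset\ne u\subseteq[1:s]\\ u\not\subseteq u_j\ \forall j}} f_u(\bsx),
\end{eqnarray*}
where the second equality uses $f(\bsx) = \sum_{u\subseteq[1:s]} f_u(\bsx)$ and the fact that the disjointness of $u_1,\ldots,u_m$ makes the index sets $\{v_j : \emptyset\ne v_j\subseteq u_j\}$ pairwise disjoint, so the subtracted sum is exactly $\sum_{u} f_u$ over the nonempty $u$ contained in some single $u_j$.

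Next I would square $g$ and integrate, invoking orthogonality to kill all cross terms:
\begin{eqnarray*}
 \int_{[0,1]^s} g(\bsx)^2\rd\bsx = \sum_{\substack{\emptyset\ne u\subseteq[1:s]\\ u\not\subseteq u_j\ \forall j}} \sigma_u^2
 = \sigma^2 - \sum_{j=1}^{m}\sum_{\emptyset\ne v\subseteq u_j}\sigma_v^2
 = \sigma^2 - \sum_{j=1}^{m}\underline{\tau}^2_{u_j},
\end{eqnarray*}
using once more the disjointness of the $u_j$ so that a nonempty $v$ lies in at most one $u_j$, and the definition $\underline{\tau}^2_{u_j} = \sum_{\emptyset\ne v\subseteq u_j}\sigma_v^2$. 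Hence $\sum_{j=1}^m \underline{\tau}^2_{u_j} = \sigma^2$ holds if and only if $\int_{[0,1]^s} g(\bsx)^2\rd\bsx = 0$, i.e. (since $g\in L^2$) if and only if $g(\bsx)=0$ almost everywhere. For the $L^2$ setting, ``$f$ separable'' is naturally interpreted as equality almost everywhere, so by Theorem \ref{theorem1} (with $P_j$ the ANOVA projections), $g=0$ a.e.\ is exactly separability of $f$ with respect to $\bsx_{u_1},\ldots,\bsx_{u_m}$. Combining the equivalences gives the claim.

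The only genuinely delicate point is the bookkeeping that translates $g$ into $\sum_{u\not\subseteq u_j\,\forall j} f_u$: one must verify carefully that disjointness of $u_1,\ldots,u_m$ guarantees each nonempty $u$ with $u\subseteq u_j$ for some $j$ is counted exactly once in the subtracted sum (never twice), so there is no over- or under-counting when passing to variances. Everything else — orthogonality, Parseval-type summation of $\sigma_u^2$, and the reduction ``$L^2$ norm zero $\iff$ function zero a.e.'' — is routine. A minor caveat worth stating is that in $L^2([0,1]^s)$ the condition of Theorem \ref{theorem1} should be read modulo null sets, which is harmless here.
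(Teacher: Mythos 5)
Your proof is correct and follows essentially the same route as the paper: both use the ANOVA orthogonality and the disjointness of the $u_j$ to identify $\sigma^2-\sum_{j=1}^m\underline{\tau}^2_{u_j}$ with the sum of $\sigma_v^2$ over the nonempty $v$ not contained in any single $u_j$, which vanishes iff those ANOVA terms vanish iff $f$ reduces to $f_\emptyset+\sum_j\sum_{\emptyset\ne v_j\subseteq u_j}f_{v_j}$. Your packaging through the residual $g$ and Theorem \ref{theorem1} merely makes the chain of equivalences (and the almost-everywhere caveat in $L^2$) more explicit than the paper's direct variance bookkeeping, but the substance is the same.
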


\begin{proof}
From the definition of $\underline{\tau}^2_{u}$, it is possible to rewrite (\ref{eq:seq_anova1}) into
\[ \sum_{j=1}^{m}\sum_{\emptyset \ne v_j\subseteq u_j}\sigma_{v_j}^2=\sum_{\emptyset \ne v\subseteq [1:s]}\sigma_v^2. \]
This equation implies that for any subset $v\subset [1:s]$ which is not a subset of $u_j$ for all $j=1,\ldots, m$, we have $\sigma^2_v=0$ and thus $f_v=0$. Therefore, $f$ can be expressed as
\[  f = f_{\emptyset}+\sum_{j=1}^{m}\sum_{\emptyset \ne v_j\subseteq u_j}f_{v_j} .\]
The proof of the reverse direction is trivial. Hence the result follows.
\end{proof}

Now we introduce the following notation.
\begin{definition}\label{def1}
For $m, s\in \nat$ such that $m\le s$, let $\{u_1,\ldots, u_m\}$ be a partition of $[1:s]$. We define a \emph{separability index} of $f$ with respect to $u_1,\ldots, u_m$ by
\[ \gamma_{u_1,\ldots, u_m}^2=\sigma^2-\sum_{j=1}^{m}\underline{\tau}^2_{u_j} .\]
\end{definition}

It is trivial from the definition that $\gamma_{u_1,\ldots, u_m}^2$ range from 0 to $\sigma^2$. Further, we emphasize that the condition $\gamma_{u_1,\ldots, u_m}^2=0$ is substituted for the condition $\sum_{j=1}^{m}\underline{\tau}^2_{u_j}=\sigma^2$ given in Theorem~\ref{theorem2}. Our goal is to construct an algorithm which estimates $\gamma_{u_1,\ldots, u_m}^2$ of a black-box function $f$ computationally. In order to obtain a computable form for estimation of $\gamma_{u_1,\ldots, u_m}^2$, we use the integral form of $\underline{\tau}^2_u$, see for example \cite{Owe13,Sal02}
\[ \underline{\tau}^2_u=\int_{[0,1]^{2s}}f(\bsx)\left(f(\bsx_u,\bsz_{-u})-f(\bsz)\right)\rd\bsx \rd\bsz ,\]
and that of $\sigma^2$
\[ \sigma^2=\int_{[0,1]^{2s}}f(\bsx)\left(f(\bsx)-f(\bsz)\right)\rd\bsx \rd\bsz ,\]
where the $s$-dimensional vector $(\bsx_u,\bsz_{-u})$ denotes $\bsy=(y_1,\ldots,y_s)$ in which $y_j=x_j$ for $j\in u$ and $y_j=z_j$ for $j\in -u$.
Then, we have
\begin{align*}
\gamma_{u_1,\ldots, u_m}^2 & = \int_{[0,1]^{2s}}f(\bsx)\left(f(\bsx)-f(\bsz)-\sum_{j=1}^{m}\left(f(\bsx_{u_j},\bsz_{-u_j})-f(\bsz) \right)\right)\rd\bsx \rd\bsz \nonumber \\
& = \int_{[0,1]^{2s}}f(\bsx)\left(f(\bsx)+(m-1)f(\bsz)-\sum_{j=1}^{m}f(\bsx_{u_j},\bsz_{-u_j})\right)\rd\bsx \rd\bsz .
\end{align*}

\begin{remark}\label{rem1}
Let $\bsx,\bsz\in [0,1]^s$. Applying the anchored decomposition (\ref{eq:proj_anchor}) with the anchor $\bsz$ to Theorem~\ref{theorem1} and Lemma~\ref{lemma1}, we have
\[ f(\bsx)+(m-1)f(\bsz)-\sum_{j=1}^{m}f(\bsx_{u_j},\bsz_{-u_j}) = 0, \]
if and only if $f$ is is separable with respect to $\bsx_{u_1},\ldots, \bsx_{u_m}$. Hence the integrand of the above expression for $\gamma_{u_1,\ldots, u_m}^2$ is always 0.
\end{remark}
Since the integral can be approximated by using Monte Carlo methods that take the average of function evaluations at random points with equal weights, we propose the following algorithm to estimate $\gamma_{u_1,\ldots, u_m}^2$.

\begin{algorithm}\label{algorithm1}(Estimation of $\gamma_{u_1,\ldots, u_m}^2$)
For $m, s\in \nat$ such that $m\le s$, let $\{u_1,\ldots, u_m\}$ be a partition of $[1:s]$. For $n\in \nat$, we proceed as follows.
\begin{enumerate}
\item Generate $\bsx^{(i)},\bsz^{(i)}\in [0,1]^s$ for $0\le i<n$ randomly and independently.
\item Compute the approximation of $\gamma_{u_1,\ldots, u_m}^2$
\begin{align}\label{approx}
\hat{\gamma}_{u_1,\ldots, u_m}^2=\frac{1}{n}\sum_{i=0}^{n-1}g_{u_1,\ldots, u_m}(\bsx^{(i)},\bsz^{(i)}),
\end{align}
where 
\[ g_{u_1,\ldots, u_m}(\bsx,\bsz):=f(\bsx)\left( f(\bsx)+(m-1)f(\bsz)-\sum_{j=1}^{m}f(\bsx_{u_j},\bsz_{-u_j}) \right) \]
\end{enumerate}
\end{algorithm}

It is obvious that the computational complexity of our algorithm is linear in $m$ and $n$. Furthermore, since the expression in the parenthesis of (\ref{approx}) is zero for any $\bsx^{(i)},\bsz^{(i)}\in [0,1]^s$ as pointed out in Remark~\ref{rem1}, our algorithm ideally yields $\hat{\gamma}_{u_1,\ldots, u_m}^2=0$ exactly when $f$ is separable with respect to $\bsx_{u_1},\ldots, \bsx_{u_m}$. This is not always the case in practice, however, since numerical computation is subject to round-off or truncation error. 

To address this issue one can consider the following statistical hypothesis testing. Note that $\hat{\gamma}_{u_1,\ldots, u_m}^2$ is an unbiased Monte Carlo estimator of $\gamma_{u_1,\ldots, u_m}^2$. Hence, if the variance of the bi-variate function $g_{u_1,\ldots, u_m}$, denoted by $\tilde{\sigma}_{u_1,\ldots, u_m}^2$, is finite, then the central limit theorem holds, i.e., $\sqrt{n}\left(\hat{\gamma}_{u_1,\ldots, u_m}^2-\gamma_{u_1,\ldots, u_m}^2\right)$ converges in distribution to the normal distribution $N(0,\tilde{\sigma}_{u_1,\ldots, u_m}^2)$ as $n\to \infty$. Here the finiteness of $\tilde{\sigma}_{u_1,\ldots, u_m}^2$ is ensured for any $f\in L^4([0,1]^s)$. 

Now let $H_0\colon \gamma_{u_1,\ldots, u_m}^2=0$ be the null hypothesis and $H_1\colon \gamma_{u_1,\ldots, u_m}^2>0$ be the alternative hypothesis. Under the null hypothesis, $\sqrt{n}\hat{\gamma}_{u_1,\ldots, u_m}^2$ is asymptotically normally distributed and its asymptotic variance can be estimated by the sample variance 
\[ \tilde{s}_{u_1,\ldots, u_m}^2 = \frac{1}{n-1}\sum_{i=0}^{n-1}\left( g(\bsx^{(i)},\bsz^{(i)})-\hat{\gamma}_{u_1,\ldots, u_m}^2\right)^2 \]
with $\bsx^{(i)},\bsz^{(i)}\in [0,1]^s$ for $0\le i<n$ generated in the first step of Algorithm~\ref{algorithm1}.
Thus it is possible to construct a test statistic
\[ T_{u_1,\ldots, u_m} =\sqrt{n}\frac{\hat{\gamma}_{u_1,\ldots, u_m}^2}{\tilde{s}_{u_1,\ldots, u_m}}.\]
We reject the null hypothesis with a significance level $\alpha$ if $T_{u_1,\ldots, u_m}>Q_{1-\alpha}$, where $Q_{1-\alpha}$ denotes the $(1-\alpha)$-quantile of the standard normal distribution.

Here we note that if $f$ is separable, the variance $\tilde{\sigma}_{u_1,\ldots, u_m}^2$ is 0. This may lead to numerical instability in computing $T$ since it involves computing the ratio of two extremely small numbers $\hat{\gamma}_{u_1,\ldots, u_m}^2$ and $\tilde{s}_{u_1,\ldots, u_m}$. In practice, $\tilde{s}_{u_1,\ldots, u_m}$ in the definition of $T_{u_1,\ldots, u_m}$ can be replaced by $\max(\tilde{s}_{u_1,\ldots, u_m},\varepsilon)$ with a user-specified small parameter $\varepsilon>0$.

So far we have discussed how to estimate the separability of functions with respect to given $\bsx_{u_1},\ldots, \bsx_{u_m}$. In order to search for a partition $\{u_1,\ldots, u_m\}$ itself such that $\gamma_{u_1,\ldots, u_m}^2$ is zero, we need to try so many possible candidates of $\{u_1,\ldots, u_m\}$ for $m=2,\ldots, s$. For making a systematic search for such $m$ and $u_1,\ldots, u_m$, we use the following lemma.

\begin{lemma}\label{lemma2}
That $f$ is separable with respect to $\bsx_{u_1},\ldots, \bsx_{u_m}$ is equivalent to that $f$ is separable with respect to $\bsx_{u_j}$ and $\bsx_{-u_j}$ for all $j=1,\ldots, m$.
\end{lemma}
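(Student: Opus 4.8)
The plan is to reduce both directions to the ANOVA characterization of separability in Theorem~\ref{theorem2} (rephrased via Definition~\ref{def1} as the statement that $f$ is separable with respect to $\bsx_{u_1},\ldots,\bsx_{u_m}$ if and only if $\gamma_{u_1,\ldots,u_m}^2=0$), so that what remains is a purely combinatorial fact about partitions.

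The forward implication needs nothing beyond the definitions: if $f(\bsx)=\sum_{k=1}^{m}g_k(\bsx_{u_k})$, then for a fixed $j$ I would write $f(\bsx)=g_j(\bsx_{u_j})+\sum_{k\neq j}g_k(\bsx_{u_k})$ and observe that, since $u_1,\ldots,u_m$ are disjoint with $\bigcup_{k=1}^{m}u_k=[1:s]$, we have $\bigcup_{k\neq j}u_k=-u_j$; hence the second summand depends only on $\bsx_{-u_j}$ and $f$ is separable with respect to $\bsx_{u_j}$ and $\bsx_{-u_j}$.

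For the reverse implication I would first record the identity
\begin{eqnarray*}
\gamma_{u_1,\ldots,u_m}^2=\sum_{\substack{\emptyset\neq v\subseteq[1:s]\\ v\not\subseteq u_k\ \text{for all}\ k}}\sigma_v^2 ,
\end{eqnarray*}
which follows at once from Definition~\ref{def1}, because a nonempty $v$ is contained in at most one of the pairwise disjoint sets $u_1,\ldots,u_m$. Combined with $\sigma_v^2\ge 0$ and Theorem~\ref{theorem2}, this shows that $f$ is separable with respect to $\bsx_{u_1},\ldots,\bsx_{u_m}$ precisely when $\sigma_v^2=0$ for every nonempty $v$ contained in no $u_k$; applying the same identity to the disjoint pair $u_j,-u_j$ shows that $f$ is separable with respect to $\bsx_{u_j}$ and $\bsx_{-u_j}$ precisely when $\sigma_v^2=0$ for every nonempty $v$ with $v\cap u_j\neq\emptyset$ and $v\setminus u_j\neq\emptyset$.

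The heart of the argument --- the only step that is more than bookkeeping --- is then the set identity
\begin{eqnarray*}
\{\,\emptyset\neq v\subseteq[1:s]:v\not\subseteq u_k\ \text{for all}\ k\,\}=\bigcup_{j=1}^{m}\{\,\emptyset\neq v\subseteq[1:s]:v\cap u_j\neq\emptyset\ \text{and}\ v\setminus u_j\neq\emptyset\,\} .
\end{eqnarray*}
For $\supseteq$ I would note that $v\cap u_j\neq\emptyset$ forces $v\not\subseteq u_k$ for all $k\neq j$ by disjointness, and $v\setminus u_j\neq\emptyset$ gives $v\not\subseteq u_j$. For $\subseteq$, if $v\neq\emptyset$ is contained in no $u_k$, then since $u_1,\ldots,u_m$ partition $[1:s]$ there is a $j$ with $v\cap u_j\neq\emptyset$, and $v\not\subseteq u_j$ makes this $j$ a witness. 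Given this identity, $\sigma_v^2$ vanishes on the left-hand set if and only if it vanishes on each set on the right, which is exactly the claimed equivalence (the degenerate case $m=1$, with $-u_1=\emptyset$, is trivial since both statements hold vacuously). I do not expect any real obstacle beyond stating and checking this combinatorial identity.
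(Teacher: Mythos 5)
Your proof is correct, but it cannot be compared step-by-step with the paper's, because the paper offers no argument here at all: the author simply declares Lemma~\ref{lemma2} trivial and omits the proof. Your route goes through the ANOVA machinery of Section~\ref{separable}: you rewrite $\gamma_{u_1,\ldots,u_m}^2$ as the sum of $\sigma_v^2$ over the nonempty $v$ contained in no $u_k$ (valid because the $u_k$ are pairwise disjoint, so no $\sigma_v^2$ is counted twice in $\sum_{j}\underline{\tau}^2_{u_j}$), do the same for each pair $(u_j,-u_j)$, and then check that the union over $j$ of the ``crossing'' index sets $\{v:\,v\cap u_j\ne\emptyset,\ v\setminus u_j\ne\emptyset\}$ is exactly the set of nonempty $v$ contained in no $u_k$; nonnegativity of the $\sigma_v^2$ together with Theorem~\ref{theorem2} then gives the equivalence, and both inclusions are verified correctly. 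The trade-off is worth noting: your argument is complete and leans only on results already proved in the paper, but it establishes the lemma only for $f\in L^2([0,1]^s)$ and with identities holding almost everywhere, since it passes through the ANOVA decomposition; that is exactly the setting in which the paper uses the lemma, so nothing is lost for Algorithm~\ref{algorithm2}, but the statement also has an elementary pointwise proof on general product domains, presumably the ``trivial'' one the author had in mind: the forward direction is regrouping, and the converse follows by induction on $m$ --- writing $f(\bsx)=g_m(\bsx_{u_m})+h_m(\bsx_{-u_m})$ and freezing $\bsx_{u_m}$ at an anchor $\bst_{u_m}$ shows that $h_m$ inherits the pairwise separability with respect to $u_1,\ldots,u_{m-1}$, so the inductive hypothesis splits $h_m$ and hence $f$. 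One cosmetic point: in your degenerate case $m=1$ the pair $(u_1,-u_1)=([1:s],\emptyset)$ is not an admissible family of disjoint subsets under the paper's definition (blocks must be nonempty), so it is cleaner to say that case is excluded, or trivially true, rather than ``vacuous''; this does not affect the argument.
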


Since this lemma is trivial, we omit the proof. This lemma implies that it is sufficient to search $u$ one-by-one whose value of $\gamma_{u,-u}^2$ is zero without $u$ touching the already found ones. Moreover, due to symmetry of $u$ and $-u$, the overall search space of $u$ can be reduced to $\emptyset \ne u\subseteq [1:s-1]$ and we can simply write $\gamma_u^2:=\gamma_{u,-u}^2$. Based on these observations, we proceed the search in the following order 
\begin{align*}
u & = \{1\}, \\
u & = \{2\}, \{1,2\}, \\
u & = \{3\}, \{1,3\}, \{2,3\}, \{1,2,3\},\\
  & \vdots \\
u & = \{s-1\}, \{1,s-1\}, \ldots, \{1,\ldots, s-1\} .
\end{align*}
If $\gamma_{u}^2$ turns out to be zero (or, if the null hypothesis $H_0\colon \gamma_{u_1,\ldots, u_m}^2=0$ is not rejected) during this process, we can omit from the remaining candidates every subset that touches at least one component of $u$.

For example, if $s=5$ and $f$ is separable with respect to $x_{1},\bsx_{\{2,4\}},\bsx_{\{3,5\}}$, we proceed the search as follows.
\begin{align*}
u & = \{1\}^*, \\
u & = \{2\}, \\
u & = \{3\},\{2,3\} \\
u & = \{4\},\{2,4\}^* ,
\end{align*}
where $*$ means that the corresponding subset of variables is found to be separable. Consequently, we obtain $u_1=\{1\},u_2=\{2,4\}$.  From Lemma \ref{lemma2}, we have $m=3$ and $u_3=\{3,5\}$.

Hence, our extended algorithm to estimate the number of disjoint subsets $m$ and the disjoint subsets themselves $u_1,\ldots, u_m$ is given as follows.

\begin{algorithm}\label{algorithm2}(Estimation of $m$ and $u_1,\ldots, u_m$)
For $s,n\in \nat$, we proceed as follows.
\begin{enumerate}
\item Generate $\bsx^{(i)},\bsz^{(i)}\in [0,1]^s$ for $0\le i<n$ randomly and independently, and set $r=m=1$.
\item For each subset $v$ such that $v\subseteq [1:r-1]\setminus \bigcup_{j=1}^{m-1}u_j$, compute $\hat{\gamma}_{v\cup\{r\}}^2$ according to (\ref{approx}). If one finds $v$ such that $\hat{\gamma}_{v\cup\{r\}}^2=0$, set $u_m=v\cup\{r\}$ and $m=m+1$.
\item Set $r=r+1$. If $r<s$, go to step 2.
\end{enumerate}
\end{algorithm}

The second step of Algorithm~\ref{algorithm2} can be replaced as follows:
\begin{enumerate}
\renewcommand{\labelenumi}{\arabic{enumi}'}
\setcounter{enumi}{1}
\item Set a significance level $\alpha$. For each subset $v$ such that $v\subseteq [1:r-1]\setminus \bigcup_{j=1}^{m-1}u_j$, compute the test statistic $T_{u_1,\ldots, u_m}$. If one finds $v$ such that $T_{u_1,\ldots, u_m}\leq Q_{1-\alpha}$, set $u_m=v\cup\{r\}$ and $m=m+1$.
\end{enumerate}

The computational complexity of our extended algorithm is function-dependent as follows. When $f$ is separable with respect to all the variables, our algorithm searches only $u=\{1\},\ldots,\{s\}$ in this order. Hence the computational complexity is minimized and becomes linear in $s$ and $n$. When $f$ is not separable with respect to any subset of variables, on the other hand, our algorithm searches for all the candidates $\emptyset \ne u\subset [1:s-1]$ so that the computational complexity is maximized. Since the cardinality of $u$ such that $\emptyset \ne u\subset [1:s-1]$ is $2^{s-1}-2$, the computational complexity becomes exponential in $s$.

From this point, Algorithm~\ref{algorithm2} should work for small $s$ but becomes infeasible as $s$ increases. How to overcome this drawback is open for further research. At this moment, for large $s$, Algorithm \ref{algorithm1} with $m=s$ and $u_j=\{j\}$ for $j=1,\ldots,s$ will be of use as an initial screening to estimate the separability with respect to all the variables at one time, which can be done with the computational complexity linear in $s$.

\section{Numerical experiments}\label{sec:numer}
Finally we conduct simple numerical experiments to illustrate how our algorithms work. Here we focus on applying Algorithm~\ref{algorithm1} and the statistical hypothesis testing for estimating the separability of functions with respect to all the variables. Let us consider the following two test functions:
\[ f_{1}(x_1,\ldots,x_s)  = \sum_{j=1}^{s}\left( x_j^2-10 \cos(2\pi x_j)+10 \right) \]
defined over $[-5.12,5.12]^s$, known as Rastrigin function, and 
\[ f_{2}(x_1,\ldots,x_s) = \sum_{j=1}^{s-1}\left( 100(x_j^2-x_{j+1})^2+(x_j-1)^2\right) \]
defined over $[-2,2]^s$, known as Rosenbrock function. A simple linear transformation enables us to transform these to functions on $[0,1]^s$ and we shall use such transformations without further notice. The minimum values of $f_1$ and $f_2$ are both 0, which are attained at $(x_1,\ldots,x_s)=(0,\ldots,0)$ and $(x_1,\ldots,x_s)=(1,\ldots,1)$, respectively. It is obvious that $f_1$ is separable, whereas $f_2$ is not.

Table~\ref{tb:result1} shows the results for $s=2$ with sample sizes $n=10^3,10^4,10^5,10^6$. Note that, for the hypothesis testing, we use a modified test statistic
\[ T_{u_1,\ldots,u_m}'=\sqrt{n}\frac{\hat{\gamma}_{u_1,\ldots, u_m}^2}{\max(\tilde{s}_{u_1,\ldots,u_m},\varepsilon)} \]
with $\varepsilon=10^{-12}$. As expected, the values of $\hat{\gamma}_{1,2}^2$ for $f_1$ are extremely small, while not for $f_2$. It is instructive to see that $\hat{\gamma}_{1,2}^2$ is not exactly 0 even for such a simple separable function $f_1$. Thus the statistical testing is helpful in this regard. The values of $T_{1,2}'$ for $f_1$ are small enough that the null hypothesis $H_0\colon \gamma_{1,2}^2=0$ is not rejected, for instance, with a significance level $\alpha=0.05$. On the other hand, the value of $T_{1,2}'$ for $f_2$ becomes larger as $n$ increases. In fact, it is expected from the definitions that the test statistic $T_{u_1,\ldots,u_m}$ or $T_{u_1,\ldots,u_m}'$ for non-separable functions should diverge with order $\sqrt{n}$, which is supported by this numerical result. Again with a significance level $\alpha=0.05$, the null hypothesis is rejected for all $n$. As shown in Table~\ref{tb:result2}, a similar result is obtained even for the high-dimensional case where $s=50$.

\begin{table}[!ht]
\begin{center}
\begin{tabular}{c|r|r|r}
& $n$ & $f_1$ & $f_2$ \\ \hline
                                & $10^3$ & $1.90\times 10^{-15}$ & $8.48\times 10^4$ \\
$\hat{\gamma}_{1,2}^2$ & $10^4$ & $9.36\times 10^{-16}$ & $7.18\times 10^4$ \\
                                & $10^5$ & $6.66\times 10^{-17}$ & $7.32\times 10^4$ \\ 
                                & $10^6$ & $6.25\times 10^{-17}$ & $7.54\times 10^4$ \\ \hline
       & $10^3$ & $0.0602$ & $4.33$ \\
$T_{1,2}'$ & $10^4$ & $0.0936$ & $11.95$ \\
       & $10^5$ & $0.0211$ & $39.78$ \\
       & $10^6$ & $0.0626$ & $127.60$ \\ \hline
\end{tabular}
\caption{Separability estimation for $f_1$ and $f_2$ with $s=2$ \label{tb:result1}}
\end{center}
\end{table}

\begin{table}[!ht]
\begin{center}
\begin{tabular}{c|r|r|r}
& $n$ & $f_1$ & $f_2$ \\ \hline
                                & $10^3$ & $9.84\times 10^{-11}$ & $6.36\times 10^6$ \\
$\hat{\gamma}_{1,\ldots,50}^2$ & $10^4$ & $2.50\times 10^{-12}$ & $3.28\times 10^6$ \\
                                & $10^5$ & $3.76\times 10^{-12}$ & $3.57\times 10^6$ \\ 
                                & $10^6$ & $6.03\times 10^{-12}$ & $3.68\times 10^6$ \\ \hline
       & $10^3$ & $0.336$ & $2.28$ \\
$T_{1,\ldots,50}'$ & $10^4$ & $0.270$ & $3.68$ \\
       & $10^5$ & $0.127$ & $12.56$ \\
       & $10^6$ & $0.645$ & $40.94$ \\ \hline
\end{tabular}
\caption{Separability estimation for $f_1$ and $f_2$ with $s=50$ \label{tb:result2}}
\end{center}
\end{table}

\section*{Acknowledgments}
This work was supported by JSPS Grant-in-Aid No.~24-4020 and No.~15K20964.

\end{document}